\theoremstyle{plain}
\newtheorem{Thm}{Theorem}[section]
\newtheorem{Lem}[Thm]{Lemma}
\newtheorem{Cor}[Thm]{Corollary}
\theoremstyle{remark}
\numberwithin{equation}{section}
\newcommand{\T}{{\mathbb T}}
\newcommand{\C}{{\mathbb C}}
\newcommand{\R}{{\mathbb R}}
\newcommand{\D}{{\mathbb D}}
\begin{document}

\title{On a characterization of finite Blaschke products}

\address{Universit\'e de Lyon; Universit\'e Lyon 1; Institut Camille Jordan CNRS UMR 5208; 43, boulevard du 11 Novembre 1918, F-69622 Villeurbanne.}
\email{fricain@math.univ-lyon1.fr}

\author{Emmanuel Fricain, Javad Mashreghi}
\address{D\'epartement de math\'ematiques et de statistique,
         Universit\'e Laval,
         Qu\'ebec, QC,
         Canada G1K 7P4.}
\email{Javad.Mashreghi@mat.ulaval.ca}

\thanks{This work was supported by NSERC (Canada), Jacques Cartier Center and ANR project FRAB (France).}

\keywords{Blaschke products, zero sets, convergence, automorphism}

\subjclass[2000]{Primary: 30D50, Secondary: 32A70}

\begin{abstract}
We study the convergence of a sequence of finite Blaschke products of a fix order toward a rotation. 
This would enable us to get a better picture of a characterization theorem for finite Blaschke products.
\end{abstract}

\maketitle

\section{Introduction}

Let $(z_k)_{1 \leq k \leq n}$ be a finite sequence in the open unit disc $\D$. Then the rational function
\[
B(z) = \gamma \, \prod_{k=1}^{n} \frac{z_k-z}{1-\bar{z}_k \, z},
\]
where $\gamma$ is a unimodular constant, is called a finite Blaschke product of order $n$ for $\D$ \cite{pD70}. There are various results characterizing these functions. For example, one of the oldest ones is due to Fatou.\\

\noindent {\bf Theorem A} (Fatou \cite{MR1504825}){\bf .}
{\em Let $f$ be analytic in the open unit disc $\D$ and suppose that
\[
\lim_{|z| \to 1} |f(z)| = 1.
\]
Then $f$ is a finite Blaschke
product.}\\

For an analytic function $f : \Omega_1 \longrightarrow \Omega_2$, the number of solutions of the equation
\[
f(z) = w, \hspace{1cm} (z \in \Omega_1, \, w \in \Omega_2),
\]
counting multiplicities, is called the {\em valence} of $f$ at $w$ and is denoted by $v_f(w)$. It is well-known that a finite Blaschke product of order $n$ has the constant valence $n$ for each $w \in \D$. But, this property in fact characterizes finite Blaschke products of order $n$.\\

\noindent {\bf Theorem B} (Fatou \cite{MR1504787, MR1504792, MR1504797}, Rad\'o \cite{rad0-1922}){\bf .}
{\em Let $f : \D \longrightarrow \D$ be an analytic function of constant valence $n \geq 1$. Then $f$ is a finite Blaschke products of order $n$.}\\

Our main concern in this paper is the following result of Heins.  We remind that a conformal automorphism of $\D$ has the general form
\[
T_{a,\gamma}(z) =   \gamma \,  \frac{a-z}{1-\bar{a} \, z},
\]
where $a \in \D$ and $\gamma \in \T$. Instead of $T_{a,1}$ we simply write $T_{a}$. The collection of all such elements is denoted by $\mbox{Aut}(\D)$.
The special automorphism
\[
\begin{array}{cccc}
\rho_\gamma: & \D & \longrightarrow & \D \\
& z & \longmapsto & \gamma z
\end{array}
\]
is called a rotation. Note that $\rho_\gamma = T_{0,-\gamma}$.

Let us remind two further notions. In the following, when we say that a sequence of functions on $\D$ is convergent, we mean that it converges uniformly on compact subsets of $\D$. If $f : \D \longrightarrow \C$ and $(a_k)_{k \geq 1} \subset \D$, with $\lim_{k \to \infty} |a_k|=1$, is such that $\lim_{k \to \infty} f(a_k) = L$, then $L$ is called an asymptotic value of $f$.\\

\noindent {\bf Theorem C} (Heins \cite{heins86}){\bf .}
{\em Let $f : \D \longrightarrow \D$ be an analytic function. Then the following assertions are equivalent:
\begin{enumerate}[(i)]
\item $f$ is a finite Blaschke product of order $\geq 1$;
\item if the sequence of automorphisms $T_{a_k,\gamma_k}$, $k \geq 1$, tends to a constant of modulus one, and if $g_k = T_{f(a_k\gamma_k)} \circ f \circ T_{a_k,\gamma_k}$ is convergent, then $g_k$ tends to a rotation;
\item we have
\[
\lim_{|z| \to 1} \frac{(1-|z|^2) \, \, |f'(z)|}{1-|f(z)|^2} = 1;
\]
\item $f$ has no asymptotic values in $\D$ and the set $\{z \in \D : f'(z) = 0 \}$ is finite.\\
\end{enumerate}}

Our contribution is to further clarify the item $(ii)$ above in which it is assumed that the limits of
two sequences of analytic functions exist. No doubt the existence of these limits depend on the parameters
$a_k \in \D$ and $\gamma_k \in \T$, $k \geq 1$.

It is easy to see that $(T_{a_k,\gamma_k})_{k \geq 1}$ tends to a unimodular constant $\gamma_0$ if and only if
\begin{equation} \label{E:t-akgammaktogammao}
\lim_{k \to \infty} a_k \, \gamma_k = \gamma_0.
\end{equation}
In the first place, we show that if $B$ is a finite Blaschke product of order $\geq 1$ and (\ref{E:t-akgammaktogammao}) holds, then
\[
\big(\, T_{B(a_k\gamma_k),\, \bar{\gamma}_k} \circ B \circ T_{a_k,\gamma_k} \,\big) (z) \,\, \longrightarrow \,\, \frac{B'(\gamma_0)}{|B'(\gamma_0)|} \, z
\]
as $k \longrightarrow +\infty$ and the convergence is uniform on compact subsets of $\D$. See Theorem~\ref{T:conv-sak}. Therefore, the sequence $T_{B(a_k\gamma_k)} \circ B \circ T_{a_k,\gamma_k}$,
which was considered in Theorem C, is convergent to a rotation if and only if $\gamma_k$ is convergent. We refer
to the example given at the end to see that this condition cannot be relaxed.
In this special situation, if $a_k \longrightarrow \gamma_0$, as $k \longrightarrow +\infty$, then
\[
\big(\, T_{B(a_k)} \circ B \circ T_{a_k} \,\big) (z) \,\, \longrightarrow \,\, \frac{B'(\gamma_0)}{|B'(\gamma_0)|} \, z,
\]
uniformly on compact subsets of $\D$. The above observations enable us to
rewrite Theorem C in the following form.\\

\noindent {\bf Theorem D.}
{\em Let $f : \D \longrightarrow \D$ be an analytic function. Then the following assertions are equivalent:
\begin{enumerate}[(i)]
\item $f$ is a finite Blaschke product of order $\geq 1$;
\item if $\gamma_k \, a_k \longrightarrow \gamma_0 \in \T$,
 then $T_{f(a_k\gamma_k), \bar{\gamma}_k} \circ f \circ T_{a_k,\,\gamma_k}$ tends to a rotation;
\item if $a_k \longrightarrow \gamma_0 \in \T$,
 then $T_{f(a_k)} \circ f \circ T_{a_k}$ tends to a rotation;
\item the equality
\[
\lim_{|z| \to 1} \frac{(1-|z|^2) \, \, |f'(z)|}{1-|f(z)|^2} = 1
\]
holds;
\item $f$ has no asymptotic values in $\D$ and the set $\{z \in \D : f'(z) = 0 \}$ is finite;
\item $f$ has a constant valence on $\D$.
\end{enumerate}
Moreover, if the above conditions hold, then the rotation promised in $(ii)$ and $(iii)$ is $\rho_\gamma$, where $\gamma = f'(\gamma_0)/|f'(\gamma_0)|$.\\}

We will just study the implication $(i) \Longrightarrow (ii)$ in Theorem~\ref{T:conv-sak}. That $(ii) \Longrightarrow (iii)$ is trivial. The implication $(iii) \Longrightarrow (iv)$ is an easy consequence of the formula
\[
\frac{(1-|a_k|^2) \, \, f'(a_k)}{1-|f(a_k)|^2} = \big(\, T_{f(a_k)} \circ f \circ T_{a_k} \,\big)' (0),
\]
and that, by assumption, the latter tends to a unimodular constant. The more delicate steps $(iv) \Longrightarrow (v) \Longrightarrow (vi)$ and $(vi) \Longrightarrow (i)$ are already taken respectively by Heins and Fatou--Rad\'o.

In the course of proof, we also show that, for a finite Blaschke product $B$, the zeros of $B'$ which are inside $\D$ are in the hyperbolic convex hull of the zeros of $B$.

\section{hyperbolic convex hull}

The relation $B(z) \, \overline{B(1/\bar{z})} = 1$ shows that
\[
B'(z) = 0 \Longleftrightarrow B'(1/\bar{z}) = 0.
\]
Hence, to study the singular points of $B$, it is enough to consider the zeros of $B'$ which are inside
the unit disc $\D$. Note that $B$ has no singular points on $\T$, according to
\eqref{eq:derive-au-bord} for instance. Moreover if $B$ has $n$ zero in $\D$, then it is easy to see that
$B'$ has $n-1$ zeros in $\D$ (counting with multiplicities).

There are various results about the relations between the zeros of a polynomial $P$ and the zeros of its
derivatives. The eldest goes back to Gauss--Lucas \cite{luc74} which says that the zeros of $P'$ are
in the convex hull of the zeros of $P$. Recently, Cassier--Chalendar \cite{cc20} established a similar
result for Blaschke products: the zeros of $B'$ in $\D$ are in the convex hull of the zeros of $B$ and $\{0\}$.
We show that the zeros of $B'$ are in the hyperbolic convex hull of the zeros of $B$.
We do not need to enlarge the zero sets of $B$ by adding $\{0\}$. Moreover, the hyperbolic convex hull of a set
is a subset of the Euclidean convex hull of the set and $\{0\}$. In particular, we improve the result
obtained  in \cite{cc20}.

Let $z_1,z_2 \in \D$. Then the hyperbolic line between $z_1$ and $z_2$ is given by
\[
\begin{array}{ccc}
[0,1] & \longrightarrow & \D\\
t & \longmapsto & \displaystyle \frac{z_1 - \frac{z_1-z_2}{1-\overline{z}_1 z_2} \, t}{1-\overline{z}_1 \, \frac{z_1-z_2}{1-\overline{z}_1 z_2} \, t}.
\end{array}
\]
This representation immediately implies that the three distinct points $z_1,z_2,z_3 \in \D$ are on the same hyperbolic line if and only if
\begin{equation} \label{E:z1z2z3online}
\left( \frac{z_1-z_2}{1-\overline{z}_1 z_2} \right) / \left( \frac{z_1-z_3}{1-\overline{z}_1 z_3} \right) \in \R.
\end{equation}
Furthermore, we see that the hyperbolic lines in $\D$ can be parameterized by
\[
\gamma \frac{a-z}{1-\overline{a} z}=t,\qquad t\in [-1,1],
\]
with $\gamma\in\T$ and $a\in\D$.

Adopting the classical definition from the Euclidean geometry, we say that a set $A \subset \D$ is hyperbolically convex if \[
z_1,z_2 \in A \Longrightarrow \forall t \in [0,1], \,\, \displaystyle \frac{z_1 - \frac{z_1-z_2}{1-\overline{z}_1 z_2} \, t}{1-\overline{z}_1 \, \frac{z_1-z_2}{1-\overline{z}_1 z_2} \, t} \in A.
\]
The hyperbolic convex hull of a set $A$ is the smallest hyperbolic convex set which contains $A$. This is clearly the intersection of all hyperbolic convex sets that contain $A$. Of course, if we need the closed hyperbolic convex hull, we must consider the intersection of all closed hyperbolic convex sets that contain $A$.

In the proof of the following theorem, we need at least two elementary properties of the automorphism
\[
T_a(z) = \frac{a-z}{1-\bar{a} \, z}, \hspace{1cm} (a,z \in
\D).
\]
Firstly, $T_a \circ T_a = id$. Secondly, if $a \in (-1,1)$, then $T_a$ maps $\D_{-} = \D \cap \{ z : \Im z < 0\}$ into $\D_+ = \D \cap \{ z : \Im z > 0\}$. In the same token, we define
$\overline{\D}_{+} = \D \cap \{ z : \Im z \geq 0\}$.

\begin{Thm} \label{T:hyper-convex}
Let $B$ be a finite Blaschke product. Then the zeros of $B'$ which are inside $\D$ are in the hyperbolic convex hull of the zeros of $B$.
\end{Thm}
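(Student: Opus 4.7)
My plan follows the idea behind the classical Gauss--Lucas theorem, adapted to the hyperbolic setting: combine a separating hyperbolic line with the logarithmic derivative formula for $B$.

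Suppose, toward a contradiction, that there is a zero $w_0 \in \D$ of $B'$ that does not lie in the hyperbolic convex hull $K$ of $\{z_1, \ldots, z_n\}$. The set $K$ is a compact hyperbolic polygon, hence the intersection of finitely many closed hyperbolic half-planes; since $w_0 \notin K$, there is a hyperbolic line $\ell$ whose associated closed half-plane contains $K$ but not $w_0$. The statement of the theorem is preserved under composition of $B$ with any $\varphi \in \mathrm{Aut}(\D)$: the zeros of $B\circ\varphi$ are $\varphi^{-1}(z_k)$, the zeros of $(B\circ\varphi)'$ in $\D$ are $\varphi^{-1}(w)$ for $w \in \D$ with $B'(w)=0$ (as $\varphi'$ does not vanish on $\D$), and the hyperbolic convex hull transforms covariantly. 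We may therefore apply an automorphism sending $\ell$ to the real diameter $(-1,1)$, placing $w_0$ in $\D_-$ while all $z_k$ land in $\overline{\D}_+$. Composing further with a real $T_u$, $u\in(-1,1)$, which preserves the real diameter and each of $\D_-$, $\overline{\D}_+$, we may normalize $w_0 = -i\beta$ for some $\beta \in (0,1)$.

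Using the standard identity
\[
\frac{B'(z)}{B(z)} \;=\; \sum_{k=1}^{n} \frac{1 - |z_k|^2}{(z - z_k)(1 - \bar{z}_k z)},
\]
which is well-defined at $z = w_0$ because $w_0 \ne z_k$ for every $k$, I evaluate the imaginary part of $(w_0 - z_k)(1 - \bar{z}_k w_0)$. Writing $z_k = a_k + i b_k$ with $b_k \ge 0$ and $w_0 = -i\beta$, a direct expansion gives
\[
\mathrm{Im}\bigl((w_0 - z_k)(1 - \bar{z}_k w_0)\bigr) \;=\; -\bigl(a_k^2\beta + b_k + b_k^2\beta + \beta + b_k\beta^2\bigr) \;<\; 0,
\]
the strict negativity arising from the term $\beta > 0$. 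Since $1 - |z_k|^2 > 0$, every summand $\frac{1-|z_k|^2}{(w_0 - z_k)(1 - \bar{z}_k w_0)}$ has strictly positive imaginary part, so the sum cannot vanish, contradicting $B'(w_0) = 0$.

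The only nontrivial step is the hyperbolic separation of $w_0$ from $K$, which follows from the general fact that any closed hyperbolically convex set is the intersection of the closed hyperbolic half-planes containing it (each of which is the image of $\overline{\D}_+$ under an automorphism). After the normalization, the rest is the routine sign calculation above.
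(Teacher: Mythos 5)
Your proof is correct and follows essentially the same route as the paper: reduce by M\"obius covariance to the configuration where all zeros of $B$ lie in $\overline{\D}_+$ and the candidate critical point lies in $\D_-$, then show that each term of $B'/B=\sum_k (1-|z_k|^2)/\bigl((z-z_k)(1-\bar z_k z)\bigr)$ has strictly positive imaginary part there, so the sum cannot vanish (you do this by a direct computation at a normalized point, while the paper argues via the boundary behaviour of each summand on $\partial\D_-$). One small slip: for $u\in(-1,1)$ the map $T_u$ \emph{swaps} $\D_+$ and $\D_-$ (as the paper itself notes), so the normalizing automorphism should be $-T_u=T_{u,-1}$, which does preserve each half-disc; alternatively, the sign computation goes through for an arbitrary $w_0\in\D_-$, making the normalization unnecessary.
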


\begin{proof}
First suppose that all zeros of $B$ are in $\D_{+}$. Then, taking the logarithmic derivative of $B$, we obtain
\[
\frac{B'(z)}{B(z)} = \sum_{k=1}^{n}
\frac{1-|z_k|^2}{(1-\bar{z}_k \, z)(z-z_k)},
\]
which implies
\[
\Im \left( \frac{B'(z)}{B(z)} \right) = \sum_{k=1}^{n}
\Im \left( \frac{1-|z_k|^2}{(1-\bar{z}_k \, z)(z-z_k)} \right).
\]
In the light of last expression, put
\[
\Phi(z) = \frac{1-|a|^2}{(1-\bar{a} \, z)(z-a)},
\]
where $a \in \D_+$ is fixed. We need to obtain the image of $\D_{-}$ under $\Phi$. To do so, we study the image of the boundary of $\partial \D_{-}$ under $\Phi$. On the lower semicircle
\[
\T_{-} = \{ e^{i\theta} : -\pi \leq \theta \leq 0 \}
\]
we have
\[
\Phi(e^{i\theta}) = \frac{1-|a|^2}{(1-\bar{a} \, e^{i\theta})(e^{i\theta}-a)} = \frac{1-|a|^2}{|e^{i\theta}-a|^2} \,\, e^{-i\theta}.
\]
Therefore, $\T_{-}$ is mapped to an arc in $\C_+$. Moreover, on the interval $t \in (-1,1)$, we have
\[
\Phi(t) = \frac{1-|a|^2}{(1-\bar{a} \, t)(t-a)} = \frac{1-|a|^2}{|t-a|^2}  \times \frac{t-\bar{a}}{1-\bar{a} \, t} = \frac{1-|a|^2}{|t-a|^2} \,\, T_t(\bar{a}).
\]
Thus $(-1,1)$ is also mapped to an arc in $\C_+=\{z\in\C:\Im z>0\}$. In short, $\partial \D_{-}$ is mapped to a closed arc in $\C_+$.
Therefore, we deduce that $\Phi$ maps $\D_{-}$ into $\C_+$. Note that $0$ is not contained in $\Phi(\D_{-})$.
This fact implies that $B'$ does not have any zeros in $\D_{-}$.

By continuity, we can say that if all zeros of $B$ are in $\overline{\D}_{+}$, then so does all the zeros of $B'$ which are in the disc.

A general automorphism of $\D$ has the form $T_{a,\gamma}=\gamma T_a$, for some
$\gamma \in \T$ and $a \in \D$. Let $f = B \circ \, T_{a, \gamma}$. Then $f$ is also a finite Blaschke product with zeros $T_a(\bar{\gamma} \, z_1)$, $T_a(\bar{\gamma} \, z_2)$,  $\dots, \, T_a(\bar{\gamma} \, z_n)$. Moreover, if we denote the zeros of $B'$ in $\D$ by $w_1,w_2,\dots,w_{n-1}$, the zeros of $f'$ in $\D$ would be $T_a(\bar{\gamma} \, w_1), \, T_a(\bar{\gamma} \, w_2),\dots,T_a(\bar{\gamma} \, w_{n-1})$. Considering the boundary curves of the hyperbolic convex hull of $z_1,\dots,z_n$, if we choose $a$ and $\gamma$ such that
\[
T_a(\bar{\gamma} \, z_1), \, T_a(\bar{\gamma} \, z_2),  \dots, \, T_a(\bar{\gamma} \, z_n) \in \overline{\D}_{+},
\]
then the preceding observation shows that
\[
T_a(\bar{\gamma} \, w_1), \, T_a(\bar{\gamma} \, w_2),\dots,T_a(\bar{\gamma} \, w_{n-1}) \in \overline{\D}_{+}.
\]
Therefore, we see that if the zeros of $B$ are on one side of the hyperbolic line
\[
\gamma \, \frac{a-z}{1-\bar{a} \, z} = t, \hspace{1cm} t \in [-1,1],
\]
then the zeros of $B'$ are also on the same side. The intersection of all such lines gives the hyperbolic convex hull of the zeros of $B$.
\end{proof}

\noindent {\em Remark}: The argument above also works for infinite Blaschke products.

Theorem \ref{T:hyper-convex} is sharp in the sense which is crystalized by the following examples. Let $a,b \in \D$ and put
\[
B(z) = \left( \frac{a-z}{1-\bar{a} \, z} \right)^m \left( \frac{b-z}{1-\bar{b} \, z} \right)^n.
\]
Clearly $B'$ has $m+n-1$ zeros in $\D$ which are $a$, $m-1$ times, and $b$, $n-1$ times, and the last one $c$ is given by the equation
\[
\frac{m(1-|a|^2)}{(1-\bar{a} \, z)^2} \left( \frac{a-z}{1-\bar{a} \, z} \right)^{m-1}  \left( \frac{b-z}{1-\bar{b} \, z} \right)^n + \left( \frac{a-z}{1-\bar{a} \, z} \right)^m  \left( \frac{b-z}{1-\bar{b} \, z} \right)^{n-1} \frac{n(1-|b|^2)}{(1-\bar{b} \, z)^2} = 0.
\]
Rewrite this equation as
\[
\left( \frac{z-a}{1-a \, \bar{z}} \right) / \left(\frac{z-b}{1-b \, \bar{z}} \right) =- \left(\frac{m(1-|a|^2)}{|1-\bar{a} \, z|^2} \right) / \left(\frac{n(1-|b|^2)}{|1-\bar{b} \, z|^2}\right) ,
\]
which, by (\ref{E:z1z2z3online}), reveals that $a,b,c$ are on the same hyperbolic line. Moreover, as $m$ and $n$ are any arbitrary positive integers, the point $c$ traverses a dense subset of the hyperbolic line segment between $a$ and $b$.

With more delicate calculations, we can consider examples of the form
\[
B(z) = \left( \frac{a-z}{1-\bar{a} \, z} \right)^m \, \, \left( \frac{b-z}{1-\bar{b} \, z} \right)^n\, \, \left( \frac{c-z}{1-\bar{c} \, z} \right)^p,
\]
where $a,b,c \in \D$ and $m,n,p \geq 1$, and observe that the zeros of $B'$, for different values of $m,n,p$, form a dense subset of the hyperbolic convex hull of $a,b,c$.

\section{Uniform convergence on compact sets}

The following result is not stated in its general form. We are content with this version which is enough to obtain our main result that comes afterward.

\begin{Lem} \label{L:unif-away-0}
Let
\[
B(z) = \gamma \, \prod_{k=1}^{n} \frac{z_k-z}{1-\bar{z}_k \, z}
\]
and fix any $M$ such that
\[
\max\{\, |z_1|, \, |z_2|, \, \dots, \, |z_n| \} < M < 1.
\]
Then there is a constant $\delta = \delta(M,B)>0$ such that, for any two distinct points $z,w$ in the annulus $\{z : M \leq |z| \leq 1/M\}$, we have
\[
|z-w|<\delta \Longrightarrow B(z) \ne B(w).
\]
\end{Lem}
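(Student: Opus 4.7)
The plan is to show that $B$ is uniformly locally injective on the compact annulus $K := \{z : M \leq |z| \leq 1/M\}$; the lemma will then follow from a routine compactness argument. We first observe that, as a rational function, $B$ has poles only at the points $1/\bar z_k$, with $|1/\bar z_k| = 1/|z_k| > 1/M$, so its poles lie outside $K$ and $B$ is analytic on an open neighborhood of $K$.

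The crucial step is to verify that $B'$ has no zeros on $K$. Inside $\D$, Theorem~\ref{T:hyper-convex} places every zero of $B'$ in the hyperbolic convex hull of $\{z_1,\dots,z_n\}$. Setting $r := \max_k |z_k|$, so that $r<M$, all the $z_k$ lie in the closed disk $\{|z|\leq r\}$; this disk is a hyperbolic ball centered at $0$ and hence hyperbolically convex, so it contains the whole hyperbolic convex hull of the $z_k$'s. Consequently every zero of $B'$ in $\D$ has modulus at most $r<M$ and lies outside $K$. For zeros of $B'$ with $|z|>1$, the symmetry $B(z)\,\overline{B(1/\bar z)}=1$ recalled at the start of Section~2 yields $B'(z)=0 \Longleftrightarrow B'(1/\bar z)=0$, so such zeros have modulus at least $1/r>1/M$, again outside $K$. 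On $\T$ itself, the standard identity
\[
|B'(e^{i\theta})|=\sum_{k=1}^n \frac{1-|z_k|^2}{|e^{i\theta}-z_k|^2}
\]
exhibits $|B'|$ as a strictly positive sum, so $B'$ has no zeros on $\T$ either. Together these facts show that $B'$ is nowhere zero on the compact set $K$.

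With $B'$ nonvanishing on $K$, a standard compactness argument finishes the proof. If no such $\delta$ existed, one could find sequences of distinct points $z_m,w_m\in K$ with $|z_m-w_m|\to 0$ and $B(z_m)=B(w_m)$; by compactness of $K$, after extracting subsequences both $z_m$ and $w_m$ converge to a common limit $z_0\in K$. Since $B'(z_0)\neq 0$ and $B$ is holomorphic at $z_0$, the inverse function theorem gives a neighborhood of $z_0$ on which $B$ is injective, contradicting $B(z_m)=B(w_m)$ for all sufficiently large $m$.

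The main obstacle is ruling out critical points of $B$ on $K$: this is precisely where Theorem~\ref{T:hyper-convex}, together with the fact that Euclidean disks centered at the origin are hyperbolically convex, is needed to confine all zeros of $B'$ inside $\D$ to $\{|z|\leq r\}\subset\{|z|<M\}$. Once $B'$ is known to be nonvanishing on the compact set $K$, uniform local injectivity is essentially formal.
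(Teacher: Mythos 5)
Your proof is correct and follows essentially the same route as the paper: both arguments reduce the lemma to the nonvanishing of $B'$ on the closed annulus (via Theorem~\ref{T:hyper-convex} inside $\D$, the reflection $B'(z)=0\Leftrightarrow B'(1/\bar z)=0$ outside, and the boundary formula \eqref{eq:derive-au-bord} on $\T$), and then conclude by a compactness/contradiction argument. The only cosmetic difference is that you invoke local injectivity from the inverse function theorem at the limit point, whereas the paper passes to the limit of the difference quotient to get $B'(a)=0$ directly; you also make explicit the (implicit) step that the hyperbolic convex hull of the zeros lies in $\{|z|\leq r\}\subset\{|z|<M\}$.
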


\begin{proof}
Suppose this is not true. Then, for each $k \geq 1$, there are $a_k$ and $b_k$ such that
\[
M \leq |a_k|, \, |b_k| \leq 1/M, \,\,\, a_k \ne b_k, \,\,\,  |a_k-b_k|<1/k,
\]
but $B(a_k) = B(b_k)$. Since the closed annulus is compact, $(a_k)_{k \geq 1}$ has a convergent subsequence. Without loss of generality, we may assume that $a_k \longrightarrow a$ for some $a$ with $M \leq |a| \leq 1/M$. This implies $b_k \longrightarrow a$. Therefore, we would have
\[
B'(a) = \lim_{k \to \infty} \frac{B(a_k)-B(b_k)}{a_k-b_k} = 0.
\]
This is a contradiction. as a matter of fact, by Theorem \ref{T:hyper-convex}, the zeros of $B'$ in $\D$ are in the hyperbolic convex hull of the zeros of $B$, and those outside $\D$ are the reflection of the zeros inside with respect to $\T$. In other words, $B'$ has no zeros on the annulus.
\end{proof}

There is another way to arrive at a weaker version of Lemma \ref{L:unif-away-0}, which will also suffice for us. On $\T$, the nice formula
\begin{equation}\label{eq:derive-au-bord}
|B'(e^{i\theta})| = \sum_{k=1}^{n} \frac{1-|z_k|^2}{|e^{i\theta}-z_k|^2}
\end{equation}
holds. Hence, at least, $B'(e^{i\theta}) \ne 0$ for all $e^{i\theta} \in \T$. Thus, by continuity, there is an annulus on which $B'$ has no zeros, and as we explained above, for such a region $\delta$ exists.

For the sake of completeness, we remind that if $a_k \in \D$, and $\gamma_k,\gamma_0 \in \T$, are such that
\[
a_k \gamma_k \longrightarrow \gamma_0, \qquad \hbox{as } k \longrightarrow +\infty,
\]
then the simple estimation
\[
\left| \gamma_0 - \gamma_k \,  \frac{a_k-z}{1-\bar{a}_k \, z} \right| \leq \frac{2\,
|\gamma_0-a_k \gamma_k|}{1-|z|}, \qquad \hbox{as } k \longrightarrow +\infty,
\]
reveals that
\begin{equation} \label{E:sakgakroi}
T_{a_k,\gamma_k}(z) \longrightarrow \gamma_0
\end{equation}
uniformly on compact subsets of $\D$.

\begin{Thm} \label{T:conv-sak}
Let $B$ be any finite Blaschke product of order $\geq 1$. Let $a_k \in \D$ and $\gamma_k,\gamma_0 \in \T$, be such that
\[
\lim_{k \to \infty} a_k \gamma_k = \gamma_0.
\]
Then
\[
T_{B(a_k\gamma_k),\, \bar{\gamma}_k} \circ B \circ T_{a_k,\gamma_k} \,\,
\longrightarrow \,\, \rho_\gamma,
\]
where $\gamma = B'(\gamma_0)/|B'(\gamma_0)|$. In other words,
\[
\big(\, T_{B(a_k\gamma_k),\, \bar{\gamma}_k} \circ B \circ T_{a_k,\gamma_k} \,\big) (z) \,\,
\longrightarrow \,\, \frac{B'(\gamma_0)}{|B'(\gamma_0)|} \, z
\]
as $k \longrightarrow +\infty$ and the convergence is uniform on compact subsets of $\D$.
\end{Thm}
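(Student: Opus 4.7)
The plan is to reduce the convergence to a computation of $g_k'(0)$, where
$g_k = T_{B(a_k\gamma_k),\bar{\gamma}_k} \circ B \circ T_{a_k,\gamma_k}$.
Set $z_k = a_k\gamma_k$. Each $g_k$ is a composition of holomorphic self-maps of $\D$, in fact a finite Blaschke product of order $n$, and $g_k(0) = 0$ because $T_{a_k,\gamma_k}(0) = z_k$ and $T_{B(z_k),\bar{\gamma}_k}(B(z_k)) = 0$.

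First I would compute $g_k'(0)$ by the chain rule. Using the two identities $T_{a,\gamma}'(0) = -\gamma(1-|a|^2)$ and $T_{w,\eta}'(w) = -\eta/(1-|w|^2)$, the factors $\gamma_k$ and $\bar{\gamma}_k$ cancel, producing
\[
g_k'(0) \;=\; \frac{(1-|z_k|^2)\,B'(z_k)}{1-|B(z_k)|^2},
\]
a quantity that depends only on the product $z_k$ and not on the individual choice of $a_k$ and $\gamma_k$.

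Next I would identify the limit of $g_k'(0)$ as $k \to \infty$. Because $B$ is a finite Blaschke product, $B$ extends analytically across $\T$ with $|B| \equiv 1$ there, so the real-analytic function $|B(z)|^2 - 1$ vanishes on the unit circle; combined with the non-vanishing of the gradient of $|z|^2 - 1$ on $\T$, this yields a real-analytic factorisation $|B(z)|^2 - 1 = (|z|^2 - 1)\, G(z)$ in a neighbourhood of $\gamma_0$. A direct computation along the radius $z = r\gamma_0$ shows $G(\gamma_0) = |B'(\gamma_0)|$, so
\[
g_k'(0) \;=\; \frac{B'(z_k)}{G(z_k)} \;\longrightarrow\; \frac{B'(\gamma_0)}{|B'(\gamma_0)|} \;=\; \gamma.
\]
In my view this is the main obstacle of the proof: the ratio $(1-|z|^2)/(1-|B(z)|^2)$ must have the correct limit for \emph{every} sequence $z_k \to \gamma_0$ in $\D$, including tangential ones. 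The analytic continuation of $B$ across $\T$, available because $B$ is a finite Blaschke product, makes this limit hold unconditionally; for a general self-map of $\D$, only the non-tangential Julia--Carath\'eodory limit is guaranteed.

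Finally I would conclude by a normal-family argument combined with the Schwarz lemma. Each $g_k$ maps $\D$ into $\D$, so the family $(g_k)$ is normal. Let $g$ be any uniform-on-compacts subsequential limit; then $g$ is holomorphic on $\D$ with $g(0) = 0$, $g'(0) = \gamma$, and $|\gamma| = 1$. Since $g'(0) \neq 0$, $g$ is nonconstant, and the maximum modulus principle then places $g(\D) \subset \D$; the Schwarz lemma, attaining equality at the origin because $|g'(0)| = 1$, forces $g(z) = \gamma z$ on $\D$. As every subsequential limit equals $\rho_\gamma$, the whole sequence $g_k$ converges to $\rho_\gamma$ uniformly on compact subsets of $\D$, which is the claim.
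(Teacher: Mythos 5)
Your proof is correct, and it takes a genuinely different and more economical route than the paper's. The paper works with the explicit factorization of $f_k = T_{B(a_k\gamma_k),\bar{\gamma}_k}\circ B\circ T_{a_k,\gamma_k}$ as a Blaschke product of order $n$ vanishing at the origin: it shows, via the separation Lemma~\ref{L:unif-away-0} (which itself rests on Theorem~\ref{T:hyper-convex} on the hyperbolic convex hull of the critical points), that the $n-1$ remaining zeros $z_{k,j}$ are pushed to $\T$, so that the corresponding normalized Blaschke factors tend to $1$, and then identifies the surviving linear factor through the same computation of $f_k'(0)$ that you perform. You bypass the zero-tracking entirely: once $g_k(0)=0$ and $g_k'(0)\to\gamma\in\T$ are known, Montel plus the equality case of the Schwarz lemma force every subsequential limit to be $\rho_\gamma$, hence the full sequence converges. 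This makes Lemma~\ref{L:unif-away-0} and the hyperbolic convex hull theorem unnecessary for Theorem~\ref{T:conv-sak} (though they remain of independent interest in the paper), at the cost of losing the quantitative picture of where the zeros of $g_k$ go. Two small points: the key analytic fact, that $(1-|B(z)|^2)/(1-|z|^2)\to|B'(\gamma_0)|$ as $z\to\gamma_0$ \emph{unrestrictedly} in $\D$, is exactly the step the paper also invokes (with only the remark ``it is also known that''), and you are right to flag that tangential approach is the issue; your real-analytic factorisation handles it, though the telescoping identity $1-\prod_j|b_j(z)|^2=\sum_j\bigl(1-|b_j(z)|^2\bigr)\prod_{i<j}|b_i(z)|^2$ applied to the individual factors $1-|b_j(z)|^2=(1-|a_j|^2)(1-|z|^2)/|1-\bar{a}_jz|^2$ gives the same unrestricted limit more directly. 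Also note that $B'(\gamma_0)\neq 0$, needed for $\gamma$ to be defined and for the Schwarz argument to see a nonconstant limit, follows from \eqref{eq:derive-au-bord}; it is worth saying so explicitly.
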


\begin{proof}
Suppose that $B$ has order $n$. Then, for each fixed $k$, the function
\begin{equation} \label{E:formul-fk}
f_k = T_{B(a_k\gamma_k),\, \bar{\gamma}_k} \circ B \circ T_{a_k,\gamma_k}
\end{equation}
is also a finite Blaschke product of order $n$. Its zeros, say
\[
z_{k,1}, \,\,\,
z_{k,2}, \,\,\,
\dots, \,\,\,
z_{k,n}, \,\,\,
\]
are the solutions of the equation
\[
B \big(\, T_{a_k,\gamma_k}  (z) \,\big) = B(a_k\gamma_k).
\]
Certainly $z=0$ is a solution. We need to show that the other zeros accumulate to some points on $\T$.

Denote the solutions of $B(w) = B(a_k\gamma_k)$ by
\[
w_{k,1}, \,\,\, w_{k,2}, \,\,\, \dots, \,\,\, w_{k,n},
\]
and put $w_{k,1} = a_k\gamma_k$. As $k \longrightarrow \infty$, we have
\[
|w_{k,j}| \longrightarrow 1, \hspace{1cm} (1 \leq j \leq n),
\]
and, by Lemma \ref{L:unif-away-0},
\[
|w_{k,i}-w_{k,j}| \geq \delta, \hspace{1cm} (1 \leq i \ne j \leq n).
\]
By the transformation $z=T_{a_k,\gamma_k}^{-1}(w)$, the first solution is mapped to the origin. But the other solutions, thanks to the positive distance between them, are mapped to points close to $\T$. More precisely, we have
\begin{equation} \label{E:zkjgoest}
\lim_{k \to \infty} |z_{k,j}| = \lim_{k \to \infty} |T_{a_k,\gamma_k}^{-1}(w_{k,j})| = 1, \hspace{1cm} (2 \leq j \leq n).
\end{equation}

Write
\[
f_k(z) = \eta_k \, z \, \prod_{j=2}^{n} \frac{z_{k,j}-z}{1-\bar{z}_{k,j} \, z},
\]
where $\eta_k \in \T$. This formula shows
\[
f_k'(0) = \eta_k \, \prod_{j=2}^{n} z_{k,j}
\]
and (\ref{E:formul-fk}) reveals that
\begin{eqnarray*}
f_k'(0) &=& T'_{B(a_k\gamma_k),\, \bar{\gamma}_k} (B(a_k\gamma_k)) \,\,\, B'(a_k\gamma_k) \,\,\, T'_{a_k,\gamma_k}(0) \\
&=&  \frac{1-|a_k\gamma_k|^2}{1-|B(a_k\gamma_k)|^2} \,\,\, B'(a_k\gamma_k).
\end{eqnarray*}
Hence, we obtain the representation
\[
f_k(z) = \frac{1-|a_k\gamma_k|^2}{1-|B(a_k\gamma_k)|^2} \,\,\, B'(a_k\gamma_k) \,\, \left( \prod_{j=2}^{n} \frac{1}{|z_{k,j}|} \times \prod_{j=2}^{n} \frac{|z_{k,j}|}{z_{k,j}} \,\, \frac{z_{k,j}-z}{1-\bar{z}_{k,j} \, z}\right) \,\, z.
\]
By (\ref{E:sakgakroi}) and (\ref{E:zkjgoest}),
\[
\left( \prod_{j=2}^{n} \frac{1}{|z_{k,j}|} \times \prod_{j=2}^{n} \frac{|z_{k,j}|}{z_{k,j}} \,\,
\frac{z_{k,j}-z}{1-\bar{z}_{k,j} \, z}\right) \longrightarrow 1,
\]
as $k \to \infty$. It is also known that
\[
\lim_{z \to \gamma_0} \frac{1-|B(z)|^2}{1-|z|^2} = |B'(\gamma_0)|.
\]
We are done.
\end{proof}

\begin{Cor} \label{C:conv-sak}
Let $B$ be a finite Blaschke product. Let $a_k \in \D$ and $\gamma_0 \in \T$ be such that
\[
\lim_{k \to \infty} a_k  = \gamma_0.
\]
Then
\[
T_{B(a_k)} \circ B \circ T_{a_k} \,\,
\longrightarrow \,\, \rho_\gamma, \qquad (k \longrightarrow +\infty),
\]
where $\gamma = B'(\gamma_0)/|B'(\gamma_0)|$.
\end{Cor}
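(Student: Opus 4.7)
The plan is to derive Corollary \ref{C:conv-sak} as a direct specialization of Theorem \ref{T:conv-sak} by choosing the sequence $(\gamma_k)_{k\geq 1}$ to be the constant sequence $\gamma_k = 1$. With this choice, the automorphism $T_{a_k,\gamma_k}$ reduces to $T_{a_k}$, and $T_{B(a_k\gamma_k),\bar\gamma_k}$ reduces to $T_{B(a_k)}$, so the composition appearing in the theorem is exactly the one appearing in the corollary.

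More precisely, I would first observe that the hypothesis of the corollary, namely $a_k \to \gamma_0 \in \T$, implies $a_k \gamma_k \to \gamma_0$ when $\gamma_k \equiv 1$, so the hypothesis of Theorem \ref{T:conv-sak} is satisfied. Next, I would unwind the notation: since $T_{a,1}(z) = T_a(z) = (a-z)/(1-\bar a z)$ and $T_{c,1}(w) = T_c(w)$, we have
\[
T_{B(a_k\gamma_k),\,\bar\gamma_k} \circ B \circ T_{a_k,\gamma_k} \;=\; T_{B(a_k)} \circ B \circ T_{a_k}
\]
for every $k$. Theorem \ref{T:conv-sak} then yields uniform convergence of this composition on compact subsets of $\D$ to the map $z \mapsto (B'(\gamma_0)/|B'(\gamma_0)|)\, z$, which is precisely $\rho_\gamma$ with $\gamma = B'(\gamma_0)/|B'(\gamma_0)|$.

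There is no real obstacle here; the corollary is just the specialization obtained by absorbing the unimodular parameter $\gamma_k$ into the trivial choice. The content of the result lies entirely in Theorem \ref{T:conv-sak}, which in turn uses Lemma \ref{L:unif-away-0} (and hence Theorem \ref{T:hyper-convex}) to control the location of the non-origin zeros of the renormalized Blaschke product and the boundary Julia--Carath\'eodory limit $\lim_{z\to\gamma_0}(1-|B(z)|^2)/(1-|z|^2) = |B'(\gamma_0)|$ to identify the limiting rotation factor.
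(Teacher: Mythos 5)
Your proposal is correct and matches the paper's intent exactly: the corollary is stated without a separate proof precisely because it is the specialization of Theorem~\ref{T:conv-sak} to the constant choice $\gamma_k\equiv 1$, under which $T_{a_k,\gamma_k}=T_{a_k}$ and $T_{B(a_k\gamma_k),\bar\gamma_k}=T_{B(a_k)}$. Nothing further is needed.
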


A simple example shows that in Theorem \ref{T:conv-sak}, $T_{B(a_k\gamma_k),\, \bar{\gamma}_k}$ cannot be replaced by $T_{B(a_k\gamma_k)}$. In other words, the rotation by $\bar{\gamma}_k$ is needed to obtain the convergence. To see this fact, let $a_k$ be any positive sequence on $[0,1)$ tending to $1$. Let $\gamma_k = (-1)^k$, and put $B(z)=z^2$. Then
\[
\big(\, T_{B(a_{2k}\gamma_{2k})} \circ B \circ T_{a_{2k},\gamma_{2k}} \,\big) (z) \,\, \longrightarrow  \, z
\]
while
\[
\big(\, T_{B(a_{2k+1}\gamma_{2k+1})} \circ B \circ T_{a_{2k+1},\gamma_{2k+1}} \,\big) (z) \,\, \longrightarrow  \, -z.
\]
Hence, $T_{B(a_k\gamma_k)} \circ B \circ T_{a_k,\gamma_k}$ is not convergent. Of course, either by Theorem \ref{T:conv-sak} or by direct verification,
\[
\big(\, T_{B(a_k\gamma_k),\, \bar{\gamma}_k} \circ B \circ T_{a_k,\gamma_k} \,\big) (z) \,\, \longrightarrow  \, z,
\]

\begin {thebibliography}{20}
\bibitem{cc20}
Cassier, G., Chalendar, I., {\em The group of the invariants of a finite Blaschke product}, Complex Variables 42 (2000), 193-206.

\bibitem{MR1504787}
Fatou, P., {\em Sur les \'equations fonctionnelles}, Bulletin de la Soci\'et\'e Math\'ematique de France, 47 (1919), 161--271.

\bibitem{MR1504792}
Fatou, P., {\em Sur les \'equations fonctionnelles}, Bulletin de la Soci\'et\'e Math\'ematique de France, 48 (1920) 33--94.

\bibitem{MR1504797}
Fatou, P., {\em Sur les \'equations fonctionnelles}, Bulletin de la Soci\'et\'e Math\'ematique de France, 48 (1920), 208--314.

\bibitem{MR1504825}
Fatou, P., {\em Sur les fonctions holomorphes et born\'ees \`a l'int\'erieur d'un cercle}, Bulletin de la Soci\'et\'e Math\'ematique de France, 51 (1923), 191--202.

\bibitem{heins86}
Heins, M., {\em Some characterizations of finite Blaschke products of positive degree},
J. Analyse Math., 46 (1986), 162--166.

\bibitem{luc74}
Lucas, F., {\em Propri\'et\'es g\'eom\'etriques des fractionnes rationnelles}, C.R. Acad. Sci. Paris, 77 (1874) 631-633.

\bibitem{pD70}
Mashreghi, J., {\em Representation Theorems in Hardy
Spaces},  London Mathematical Society Student Text Series 74, Cambridge University Press, 2009.

\bibitem{rad0-1922}
Rad{\'o}, Tibor, {\em Zur Theorie der mehrdeutigen konformen Abbildungen}, Acta Litt. ac Scient. Univ. Hung. (Szeged), 1 (1922-23), 55--64.

\end{thebibliography}

\end{document}